\documentclass[final]{opt2024} % Author names withheld

% Maths Operator

\DeclareMathOperator*{\argmin}{arg\,min}
\DeclareMathOperator*{\rank}{rank}
\DeclareMathOperator*{\Span}{span}

\newcommand{\sKHat}{\hat{s}_k}
\newcommand{\xK}{x_k}
\newcommand{\xKPlusOne}{x_{k+1}}
\newcommand{\fK}{f(x_k)}
\newcommand{\fKPlusOne}{f(x_k + s_k)}
\newcommand{\sK}{s_k}
\newcommand{\kappaS}{\kappa_S}
\newcommand{\kappaT}{\kappa_T}
\newcommand{\mKHat}[1]{\hat{m}_k\left( #1\right)}
\newcommand{\normTwo}[1]{\left\lVert#1\right\rVert_2}
\newcommand{\hessFK}{\nabla^2 f(x_k)}
\newcommand{\SK}{S_k}
\newcommand{\R}{\mathbb{R}}
\newcommand{\texteq}[1]{\text{\quad #1}}

\usepackage{float}
\usepackage{booktabs}

% The following packages will be automatically loaded:
% amsmath, amssymb, natbib, graphicx, url, algorithm2e

\title[Scalable Second-Order Optimization Algorithms for Minimizing Low-rank Functions]{Scalable Second-Order Optimization Algorithms for Minimizing Low-rank Functions}

% Use \Name{Author Name} to specify the name.
% If the surname contains spaces, enclose the surname
% in braces, e.g. \Name{John {Smith Jones}} similarly
% if the name has a "von" part, e.g \Name{Jane {de Winter}}.
% If the first letter in the forenames is a diacritic
% enclose the diacritic in braces, e.g. \Name{{\'E}louise Smith}

% Authors with the same address:
\optauthor{%
\Name{Edward Tansley} \Email{tansley@maths.ox.ac.uk}\\
\Name{Coralia Cartis} \Email{cartis@maths.ox.ac.uk}\\
\addr Mathematical Institute, University of Oxford}

% Authors with different addresses:
%\optauthor{%
% \Name{Author Name1} \Email{abc@sample.com}\\
% \addr Address 1
% \AND
% \Name{Author Name2} \Email{xyz@sample.com}\\
% \addr Address 2%
%}

\begin{document}

\maketitle

\begin{abstract}%
We present a random-subspace variant of cubic regularization algorithm that chooses the size of the subspace adaptively, based on the rank of the projected second derivative matrix. Iteratively, our variant only requires access to  (small-dimensional) projections of first- and second-order problem derivatives  and calculates a reduced step inexpensively. The ensuing method maintains the optimal global rate of convergence of  (full-dimensional) cubic regularization, while showing improved scalability both theoretically and numerically, particularly when applied to low-rank functions. When applied to the latter, our algorithm naturally adapts the subspace size to the true rank of the function, without knowing it a priori. 
\end{abstract}

\section{Introduction}

Second-order optimization algorithms for the unconstrained optimization problem
\begin{equation*}
\min_{x\in \R^d} f(x),
\end{equation*}
where $f:\R^d\rightarrow \R$ is a sufficiently smooth, bounded-below function, 
use gradient and curvature information to determine iterates and so often experience faster convergence than first-order algorithms that only rely on gradient information. However, for high-dimensional problems, the computational complexity of these methods can be a barrier to their use in practice. We are concerned with the task of scaling up second-order optimization algorithms so that they are a practical option for high-dimensional problems.

A second-order algorithm designed to cope with high-dimensional problems is the \mbox{R-ARC} algorithm \cite{Zhen-PhD, shaoRandomsubspaceAdaptiveCubic2022}, a random subspace variant of the Adaptive Regularization using Cubics (ARC) algorithm \cite{Cartis:2009fq}. Subject to certain conditions on the random subspaces, R-ARC can attain the same convergence rate to an $\epsilon$-approximate first-order minimizer as ARC. These conditions imply that R-ARC is particularly effective for functions with Hessians of rank bounded by some $r$ (significantly) lower than the function dimension $d$. A class of functions with this property are \textit{low-rank functions} \cite{wang_bayesian_2016}, which have been frequently studied in the context of machine learning.

\paragraph{ARC and R-ARC}
ARC \cite{nesterovCubicRegularizationNewton2006, Cartis:2009fq} is an iterative algorithm that at iteration $k$, determines the step $s_k$ by (approximately) solving the following local model\footnote{Here $\sigma_{k}/3$ replaces $L/6$ in the well-known bound $f(x_{k} + s) \leq f(x_{k}) + \langle \nabla f(x_{k}),\ s\rangle +  \frac{1}{2}\langle s,\ \nabla^{2} f(x_{k})s\rangle + \frac{\sigma_{k}}{3}\|s\|_{2}^{3}$ (where $L$ is the Lipschitz constant of $\nabla^{2} f$).}:
    \begin{equation*}
        \argmin_{s \in \R^{d}} m_{k}(s) = f(x_{k}) + \langle \nabla f(x_{k}),\ s\rangle +  \frac{1}{2}\langle s,\ \nabla^{2} f(x_{k})s\rangle + \frac{\sigma_{k}}{3}\|s\|_{2}^{3}
    \end{equation*}
where $\nabla f$ and $\nabla^{2} f$ denote the gradient and Hessian of $f$, $x_{k}$ is the current iterate and $\sigma_{k}$ is the regularization parameter. Assuming Lipschitz continuity of the Hessian on the iterates' path, ARC requires at most $\mathcal{O}(\epsilon^{-3/2})$ iterations to attain an $\epsilon$-approximate first-order minimizer; this convergence rate is optimal over a large class of second-order methods \cite{cartis_evaluation_2022}.

A random subspace variant of ARC, R-ARC was introduced in \cite{Zhen-PhD, shaoRandomsubspaceAdaptiveCubic2022}. At each iteration $k$, a random sketching matrix $S_{k} \in \R^{l \times d}$ is drawn from a distribution $\mathcal{S}$ and the search space for $\sK \in \R^{d}$ is restricted so the $l$-dimensional subspace $\Span(\SK^{\top})$. These papers prove that assuming certain embedding conditions on $\mathcal{S}$, the optimal $\mathcal{O}(\epsilon^{-3/2})$ iteration complexity can be attained by ARC, despite only accessing projected (first- and second-order) problem information at each iteration. We now state an informal version of this result, restricted to Gaussian matrices.

\begin{theorem}[Informal, \cite{Zhen-PhD, shaoRandomsubspaceAdaptiveCubic2022}]
    Suppose that $\mathcal{S}$ is the distribution of (scaled) $l \times d$ Gaussian matrices with $l = \mathcal{O}(r+1)$, where $r \leq d$ is an upper bound on the maximum rank of $\,\nabla^{2}f(x_k)$ across all iterations, and that $f$ has globally Lipschitz continuous second derivatives. Then R-ARC achieves the optimal $\mathcal{O}(\epsilon^{-3/2})$ rate of convergence, with high probability.
    \label{thm:R-ARC_convergence}
\end{theorem}

The proof of this Theorem relies upon $\mathcal{S}$ being an oblivious subspace embedding (Definition \ref{def:oblivious_embedding}) for  matrices with rank $r+1$. Similar results can be established for matrix distributions other than Gaussian, with $l$ possibly having a different dependency on $r$ \cite{cartis_randomised_2022}. Theorem \ref{thm:R-ARC_convergence} can be applied to any suitable objective function $f$. However, the requirement that $l = \mathcal{O}(r+1)$ means that unless $r \ll d$, R-ARC is not guaranteed to be able  to  gain a significant dimensionality over ARC (by using only little problem information and computing an inexpensive reduced step), whilst maintaining the $\mathcal{O}(\epsilon^{-3/2})$ convergence rate.

In R-ARC, the sketch dimension $l$ is fixed throughout the run of the algorithm. Theorem \ref{thm:R-ARC_convergence} requires $l$ to be proportional to a bound $r$ on the maximal Hessian rank at the iterates, but this may not be known a priori. This motivates us  to develop a variant of R-ARC that can adapt the sketch/subspace size  to local problem information.

\paragraph{Low-rank functions}
We now define a class of functions that particularly benefit from random subspace algorithms. These functions are also known as functions with \textit{low effective dimensionality}, with \textit{active subspaces} or \textit{multi-ridge} functions \cite{cartisDimensionalityReductionTechnique2022, cartis_learning_2024}.

\begin{definition}[Low-rank Functions \cite{wang_bayesian_2016}]\label{def:low:rank}
    A function $f:\R^{d} \rightarrow \R$ is said to be of rank $r$, with $r \leq d$ if
    \begin{itemize}
        \item there exists a linear subspace $\mathcal{T}$ of dimension $r$ such that for all $x_{\top} \in \mathcal{T} \subset \R^{d}$ and $x_{\perp} \in \mathcal{T}^{\perp} \subset \R^{d}$, we have $f(x_{\top} + x_{\perp}) = f(x_{\top})$, where $\mathcal{T}^{\perp}$ is the orthogonal complement of $\mathcal{T}$;
        \item $r$ is the smallest integer with this property
    \end{itemize}
    We call $\mathcal{T}$ the effective subspace of $f$ and $\mathcal{T}^{\perp}$ the constant subspace of $f$. 
\end{definition}
We state a lemma whose proof is included in Appendix \ref{sec:results} and which then helps us apply the results in \cite{Zhen-PhD, shaoRandomsubspaceAdaptiveCubic2022} to low-rank functions (note the requirement on a bound on the $\rank(\hessFK)$ in Theorem \ref{thm:R-ARC_convergence}).

\begin{lemma}\label{lem:low:rank:hessian}
   If $f:\R^{d} \rightarrow \R$ is a low-rank function of rank $r$, and $f$ is $C^2$, then for all $x \in \R^{d}$, $\nabla^{2} f(x)$ has rank at most $r$.
\end{lemma}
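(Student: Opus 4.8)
The plan is to exploit the translation invariance of $f$ along the constant subspace $\mathcal{T}^{\perp}$ to show that every direction in $\mathcal{T}^{\perp}$ lies in the kernel of the Hessian, and then to invoke the rank--nullity theorem.

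First I would recast the defining property in a translation-invariant form. Writing any $y \in \R^{d}$ as $y = y_{\top} + y_{\perp}$ with $y_{\top} \in \mathcal{T}$ and $y_{\perp} \in \mathcal{T}^{\perp}$, Definition \ref{def:low:rank} gives $f(y) = f(y_{\top})$. Hence for any $v \in \mathcal{T}^{\perp}$ we have $(y+v)_{\top} = y_{\top}$, and therefore $f(y + v) = f(y)$ for all $y \in \R^{d}$; that is, $f$ is invariant under translations by vectors in $\mathcal{T}^{\perp}$.

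Next I would differentiate this identity. Since $f$ is $C^2$, its gradient $\nabla f$ is $C^1$, and differentiating $f(y+v) = f(y)$ with respect to $y$ yields $\nabla f(y + v) = \nabla f(y)$ for every $v \in \mathcal{T}^{\perp}$ and every $y$. Fixing $x \in \R^{d}$ and $v \in \mathcal{T}^{\perp}$ and considering the map $t \mapsto \nabla f(x + tv)$, the chain rule gives derivative $\nabla^{2} f(x + tv)\, v$, while the right-hand side $\nabla f(x)$ is constant in $t$. Evaluating at $t=0$ gives $\nabla^{2} f(x)\, v = 0$, so $\mathcal{T}^{\perp} \subseteq \ker \nabla^{2} f(x)$.

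Finally I would conclude by dimension counting: since $\dim \mathcal{T}^{\perp} = d - r$, the kernel of $\nabla^{2} f(x)$ has dimension at least $d - r$, and rank--nullity gives $\rank \nabla^{2} f(x) \leq d - (d-r) = r$, as required. The only point demanding care is the legitimacy of differentiating the functional identity $f(y+v)=f(y)$, which is immediate from the $C^2$ hypothesis, so I anticipate no substantial obstacle. An equivalent elementary route, avoiding differentiation of a vector identity, is to compute the mixed second directional derivative $\langle \nabla^{2} f(x)\, v,\ w\rangle = \partial_{t}\partial_{s} f(x + tv + sw)|_{t=s=0}$ for $v \in \mathcal{T}^{\perp}$ and arbitrary $w \in \R^{d}$; since the $\mathcal{T}$-component of $x + tv + sw$ does not depend on $t$, this quantity vanishes, again yielding $\nabla^{2} f(x)\, v = 0$.
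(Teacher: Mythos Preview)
Your proof is correct but follows a genuinely different route from the paper's. The paper first invokes a characterization lemma (Lemma~\ref{lem:low:rank:characterisation}) asserting that a rank-$r$ function can be written as $f(x)=\sigma(Ax)$ for some $A\in\R^{r\times d}$ and $\sigma:\R^{r}\to\R$, then applies the chain rule to obtain $\nabla^{2}f(x)=A^{\top}[\nabla^{2}\sigma(Ax)]A$, from which the rank bound is immediate since $\rank(A)\le r$. You instead work directly from Definition~\ref{def:low:rank}: translation invariance along $\mathcal{T}^{\perp}$ forces $\mathcal{T}^{\perp}\subseteq\ker\nabla^{2}f(x)$, and rank--nullity finishes. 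Your argument is more self-contained, avoiding the auxiliary representation lemma and pinpointing the kernel explicitly; the paper's argument, on the other hand, delivers an explicit low-rank factorization of the Hessian, which can be useful downstream (e.g.\ for sketching arguments).
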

Overparameterized models in various applications are candidates for low-rank behaviour as we expect invariance to some reparameterization; provided such invariance is (approximately) linear. In deep neural networks, there are two sources of low-rank behaviour: the training loss as a function of parameters \cite{cosson_gradient_2022}, and the trained net as a function of the input data \cite{parkinsonReLUNeuralNetworks2024}. Further,  in hyperparameter optimization, low-rank behaviour is observed as network performance only depends upon a selection of hyperparameters \cite{bergstraRandomSearchHyperParameter2012}.

\paragraph{Contributions} We introduce R-ARC-D, a new variant of the R-ARC algorithm that can vary the size of the random susbspace between iterations. We detail an update scheme for the sketch size that adapts to the local Hessian rank of the iterates. This algorithm attains the optimal $\mathcal{O}(\epsilon^{-3/2})$ iteration complexity for an $\epsilon$-approximate first-order minimizer, whilst maintaining a sketch dimension $l_{k}$ that is $\mathcal{O}(r)$ for functions with Hessians with rank bounded by $r$, in particular low-rank functions, despite the algorithm not needing to know $r$ a priori. Through numerical experiments on low-rank problems, we demonstrate the superior efficiency of this algorithm compared to the R-ARC and ARC algorithms on these problems.

\section{Algorithm and Main Results}\label{sec:algorithm}

In this section, we present the R-ARC-D algorithm and conditions under which it can attain the optimal $\mathcal{O}(\epsilon^{-3/2})$ iteration complexity for finding an $\epsilon$-approximate first-order local minimizer. \mbox{R-ARC-D} differs from the R-ARC algorithm presented by \cite{Zhen-PhD, shaoRandomsubspaceAdaptiveCubic2022} as it allows for the sketch size to be iteratively adjusted.

{\small{\begin{algorithm2e}[H]
    \caption{Random subspace cubic regularisation algorithm with variable sketching\\ dimension \mbox{(R-ARC-D)}}
    \SetAlgoLined

     Choose constants $\theta \in (0,1)$, $\kappaT, \kappaS \geq 0$. Initialize the algorithm by setting 
 $x_0 \in \R^d$ and $k=0$.\\

    Draw a random matrix $S_k \in \R^{l_k \times d}$ from $\cal{S}$, and let
    \begin{equation}\label{m_k_hat}
        \hat{m}_{k}(\hat{s}) = \underbrace{f(x_k) + \langle \hat{\nabla} f(x_k),\ \hat{s}\rangle +  \frac{1}{2}\langle \hat{s},\ \hat{\nabla}^{2} f(x_{k})\hat{s}\rangle}_{\displaystyle \hat{q}_{k}(\hat{s})} + \frac{\sigma_{k}}{3}\|S_{k}^{\top} \hat{s}\|_{2}^{3},
    \end{equation}
    where $\hat{\nabla} f(x_{k}) = S_{k}\nabla f(x_{k})$ and $\hat{\nabla}^{2} f(x_{k}) = S_{k}\nabla^{2}f(x_{k}) S_{k}^{\top}$.\\
    \vspace{1em}
    
    Compute $\sKHat\in \R^{l_k}$ by approximately minimizing $\hat{m}$ such that
    \begin{align*}
        \hat{m}(\sKHat) \leq \hat{m}(0); \quad
        \normTwo{\nabla \hat{m}(\sKHat)} \leq 
        \kappa_T \|S_{k}^{\top} \hat{s}\|_{2}^{2}; \quad
        \nabla^2 \mKHat{\sKHat} \succeq -\kappaS \|S_{k}^{\top} \hat{s}\|_{2}.
    \end{align*}
    
    Compute a trial step
        $\sK = \SK^T \sKHat.$
    
    Compute and check whether the decrease ratio satisfies:
    \begin{equation*}
     \rho_{k}: = \frac{\fK - \fKPlusOne}{\fK - \hat{q}_{k}(\hat{s})}\geq \theta.
    \end{equation*}
    
    If the decrease condition holds, set $\xKPlusOne = \xK + \sK$ and $\sigma_{k+1} < \sigma_{k}$ [successful iteration]. 
    
    Otherwise set $\xKPlusOne = \xK$ and $\sigma_{k+1} > \sigma_{k}$ [unsuccessful iteration].
    
    Increase the iteration count by setting $k=k+1$ in both cases. Set $l_{k+1} \geq l_k$
    \label{alg:R-ARC-D}
    
\end{algorithm2e}}}

R-ARC as presented in \cite{Zhen-PhD, shaoRandomsubspaceAdaptiveCubic2022} can be recovered from Algorithm \ref{alg:R-ARC-D} by fixing $l_k = l$ from some $l \geq 1$. Further setting $S_k = I_d$ for each iteration recovers the original ARC algorithm.

\paragraph{Evaluating sketched problem information} Algorithm \ref{alg:R-ARC-D}   only requires projected objective's gradients and Hessians (see $\hat{\nabla} f(x_{k})$ and $\hat{\nabla}^{2} f(x_{k})$ in \eqref{m_k_hat}).  These can be calculated efficiently, without evaluation of full gradients and Hessians, using techniques such as using directional derivatives, block finite differences or automatic differentiation. For example, $\hat{\nabla} f(x_{k})$ requires only $l_k$ directional derivatives of $f$ with respect  to the rows of $S_k$.

\subsection{An adaptive sketch size rule}

We introduce a sketch size update rule that can be included in Algorithm \ref{alg:R-ARC-D}. The motivation behind this update rule is that we seek to use local problem information to, in a sense, learn the rank of the function $f$ (assuming that it has low-rank structure). To do this, we keep track of the observed ranks of the sketched Hessian. In Algorithm \ref{alg:R-ARC-D}, we define the following for $k \geq 0:$
\begin{equation*}
    r_k := \rank(\hessFK);\quad \hat{r}_k := \rank(\SK \hessFK \SK^T);\quad \hat{R}_k := \max_{1 \leq j \leq k}\hat{r}_k.
\end{equation*}
Using these, we can give the following update rule:

\begin{equation}\label{eq:l_k:update:step}
    l_{k+1} = \begin{cases}
        \max( C\hat{R}_{k} + 1, l_{k})&\text{if $\hat{R}_k > \hat{R}_{k-1}$} \\
        l_k &\text{otherwise.}
    \end{cases}
    \tag{$\star$}
\end{equation}
where $C \geq 1$ is a user-defined constant. We make two remarks:
\begin{enumerate}
    \item in the case that $C = 1$, we simply need to assess whether the sketched Hessian is singular, rather than know its rank.
    This is because $\hat{R}_k \leq l_k$ so $l_{k+1} > l_k$ only if $\hat{r}_{k} = l_k$.
    \item for all $k$, we have $l_{k} \leq \max(Cr + 1, l_0)$ where $r$ is the rank of $f$, and hence $l_{k}$ remains $\mathcal{O}(r)$.
\end{enumerate}
We now seek to show that if this update rule is followed, the sketch dimension $l_k$ will increase in a manner that enables the same $\mathcal{O}(\epsilon^{-3/2})$ iteration complexity to drive the norm of the objective's gradient norm below $\epsilon$ as Theorem \ref{thm:R-ARC_convergence}.

\begin{lemma}\label{lem:rank_preserve}
    Letting $S_{k} \in \R^{l_k \times d}$ be a Gaussian matrix with $l_{k} \leq d$, we have
    \begin{equation}
        \mathbb{P}(\hat{r}_{k} = \min(l_{k}, r_{k})) = 1.
    \end{equation}
\end{lemma}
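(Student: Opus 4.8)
The plan is to establish the two inequalities $\rank(\hat H_k) \le \min(l_k, r_k)$ and $\rank(\hat H_k) \ge \min(l_k, r_k)$, where I abbreviate $H = \hessFK$ and $\hat H_k = \SK H \SK^\top$. The first is deterministic: since $\rank(AB) \le \min(\rank A, \rank B)$, we have $\rank(\hat H_k) \le \rank(H) = r_k$ and $\rank(\hat H_k) \le \rank(\SK) \le l_k$, so $\rank(\hat H_k) \le \min(l_k, r_k)$ for every realisation of $\SK$. The content of the lemma is therefore the reverse inequality, which I will show holds almost surely. To expose the relevant structure, I would use the spectral decomposition $H = U \Lambda U^\top$, where $U \in \R^{d \times r_k}$ has orthonormal columns and $\Lambda \in \R^{r_k \times r_k}$ is diagonal and invertible; then $\hat H_k = G \Lambda G^\top$ with $G := \SK U \in \R^{l_k \times r_k}$.

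The reverse inequality I would prove by a generic-rank argument. The entries of $\hat H_k$ are polynomials in the entries of $\SK$, so each $m \times m$ minor of $\hat H_k$ (with $m := \min(l_k, r_k)$) is a polynomial in those entries, and the event $\{\rank(\hat H_k) < m\}$ is exactly the set on which all these minors vanish simultaneously, a real algebraic variety in $\R^{l_k \times d}$. If I can show that at least one such minor is not the identically-zero polynomial, then its zero set is a proper subvariety and hence has Lebesgue measure zero; since a (scaled) Gaussian $\SK$ has a density with respect to Lebesgue measure, the event $\{\rank(\hat H_k) < m\}$ has probability zero, giving $\rank(\hat H_k) = m$ almost surely.

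The crux is therefore to certify that some $m \times m$ minor is a nonzero polynomial, equivalently to exhibit a single matrix $\SK$ (or, via $G = \SK U$, a single $G$, which can realise any $l_k \times r_k$ matrix by taking $\SK = G U^\top$) for which $\rank(G \Lambda G^\top) = m$. I would construct $G$ by coordinate selection: when $l_k \le r_k$, choosing $G = [\,I_{l_k}\ \ 0\,]$ makes $G \Lambda G^\top$ equal to the leading $l_k \times l_k$ diagonal block of $\Lambda$, which has rank $l_k$; when $r_k < l_k$, choosing $G = [\,I_{r_k}\ \ 0\,]^\top$ gives $G \Lambda G^\top = \operatorname{diag}(\Lambda, 0)$ of rank $r_k$. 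In both cases the achieved rank equals $m$, so the corresponding minor does not vanish identically.

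The main obstacle to watch is that $\Lambda$ may be indefinite ($H$ is symmetric but need not be positive semidefinite), so one cannot simply write $\hat H_k = (G\Lambda^{1/2})(G\Lambda^{1/2})^\top$ and read off $\rank(\hat H_k) = \rank(G)$: for indefinite $\Lambda$ there genuinely exist full-rank $G$ for which cancellation drops the rank of $G \Lambda G^\top$ (e.g.\ $\Lambda = \operatorname{diag}(1,-1)$, $G = [\,1\ \ 1\,]$ yields rank $0$). This is precisely why a pointwise algebraic identity is unavailable and why the argument must route through the generic-rank/measure-zero machinery, anchored by the explicit non-cancelling construction above. Alternatively, one may note that by rotational invariance $G = \SK U$ is itself a (scaled) Gaussian $l_k \times r_k$ matrix, and run the measure-zero argument on the law of $G$ directly.
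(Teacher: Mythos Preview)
The paper states this lemma without proof (it is invoked in the proof of Lemma~\ref{lem:lk:increases} but is not established anywhere in the main text or the appendix), so there is no paper argument to compare against. Your proposal is correct and would fill this gap: the deterministic upper bound $\hat r_k\le\min(l_k,r_k)$ is immediate, and your generic-rank argument for the reverse inequality---writing the $m\times m$ minors of $\hat H_k$ as polynomials in the entries of $S_k$, exhibiting an explicit $S_k$ (via $G=[I_{l_k}\ 0]$ or $G=[I_{r_k}\ 0]^\top$ and $S_k=GU^\top$) at which some such minor is nonzero, and concluding that the rank-deficient locus is a proper algebraic variety of Lebesgue measure zero and hence of Gaussian probability zero---is sound. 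Your caution about the indefinite case is well placed: one cannot simply factor through a square root of $\Lambda$ and read off $\rank(\hat H_k)=\rank(G)$, and the polynomial/measure-zero route (or the equivalent observation that $G=S_kU$ is itself Gaussian) is the clean way around it.
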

We apply this Lemma to prove the following Corollary.

\begin{lemma}\label{lem:lk:increases}
    Set $l_0 \geq 1$ and suppose that the update rule (\ref{eq:l_k:update:step}) is applied to $l_k$. For all $k \geq 1$, If $l_{k} < Cr_{k} + 1$, then with probability 1, $\hat{R}_{k} > \hat{R}_{k-1}$.
\end{lemma}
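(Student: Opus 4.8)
The plan is to reduce the claim to a deterministic statement about the sketch size together with a single application of Lemma~\ref{lem:rank_preserve}. Since $\hat{R}_k = \max(\hat{R}_{k-1}, \hat{r}_k)$ and $\hat{R}_{k} \geq \hat{R}_{k-1}$ always holds, the event $\{\hat{R}_k > \hat{R}_{k-1}\}$ coincides exactly with the event $\{\hat{r}_k > \hat{R}_{k-1}\}$. Because $S_k$ is drawn afresh and independently of the history that determines $x_k$ (hence $r_k = \rank(\hessFK)$) and $l_k$, Lemma~\ref{lem:rank_preserve} applies conditionally on that history and yields $\hat{r}_k = \min(l_k, r_k)$ with probability $1$. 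Thus it suffices to prove the deterministic inequality $\min(l_k, r_k) > \hat{R}_{k-1}$ under the hypothesis $l_k < C r_k + 1$; the ``with probability $1$'' qualifier is then inherited solely from Lemma~\ref{lem:rank_preserve}.

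The key step is an invariant linking the sketch size to the running maximal reduced rank: for every $k \geq 1$,
\begin{equation*}
    l_k \geq C\hat{R}_{k-1} + 1.
\end{equation*}
I would prove this by induction using the update rule \eqref{eq:l_k:update:step}, noting that it holds for every realization since it involves no randomness beyond the already-observed ranks. For the base case, $\hat{R}_0 = 0$ (the empty maximum) and $l_1 \geq l_0 \geq 1$. For the inductive step I distinguish the two branches of \eqref{eq:l_k:update:step}: if $\hat{R}_k > \hat{R}_{k-1}$ then $l_{k+1} = \max(C\hat{R}_k + 1,\ l_k) \geq C\hat{R}_k + 1$ directly; otherwise $\hat{R}_k = \hat{R}_{k-1}$ (the maximum over a larger index set cannot decrease) and $l_{k+1} = l_k \geq C\hat{R}_{k-1} + 1 = C\hat{R}_k + 1$ by the inductive hypothesis. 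This is the part I expect to require the most care, chiefly in pinning down the indexing of $\hat{R}$ and the empty-maximum convention at $k=0$, and in checking that the invariant's strict ``$+1$'' is genuinely needed (a weaker bound such as $\hat{R}_{k-1} \leq l_k$ would not rule out equality).

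With the invariant in hand, the conclusion follows from a short case analysis on the sign of $r_k - l_k$, using $C \geq 1$. If $r_k \geq l_k$, then $\min(l_k, r_k) = l_k \geq C\hat{R}_{k-1} + 1 \geq \hat{R}_{k-1} + 1 > \hat{R}_{k-1}$. If instead $r_k < l_k$, then $\min(l_k, r_k) = r_k$, and the hypothesis $l_k < C r_k + 1$ rearranges to $r_k > (l_k - 1)/C$, while the invariant rearranges to $(l_k - 1)/C \geq \hat{R}_{k-1}$; chaining these gives $r_k > \hat{R}_{k-1}$. In both cases $\min(l_k, r_k) > \hat{R}_{k-1}$, so on the probability-$1$ event supplied by Lemma~\ref{lem:rank_preserve} we obtain $\hat{r}_k > \hat{R}_{k-1}$, and therefore $\hat{R}_k > \hat{R}_{k-1}$, as required.
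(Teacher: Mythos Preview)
Your proposal is correct and follows the same approach as the paper: establish the deterministic inequality $\hat{R}_{k-1} < \min(l_k, r_k)$ from the update-rule invariant $l_k \geq C\hat{R}_{k-1}+1$, then invoke Lemma~\ref{lem:rank_preserve}. The paper's proof is simply terser, asserting the two needed consequences (``$l_k < Cr_k+1 \Rightarrow \hat{R}_{k-1} < r_k$'' and ``$l_k \geq \hat{R}_{k-1}+1$'') directly from the update rule without spelling out the induction that you make explicit.
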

\begin{proof}
    By the update rule \eqref{eq:l_k:update:step}, we have that $l_{k} < Cr_{k} + 1 \implies \hat{R}_{k-1} < r_{k}$. We also have that $l_{k} \geq \hat{R}_{k-1} + 1$. Therefore, we have that $l_{k} < Cr_{k} + 1 \implies \hat{R}_{k-1} < \min(l_{k}, r_{k})$.  Hence, by Lemma \ref{lem:rank_preserve}, we have that $\hat{r}_{k} = \min(l_{k}, r_{k}) > \hat{R}_{k-1}$ with probability 1.
\end{proof}
Applying this Lemma allows us to prove the following convergence result.

\begin{theorem}
    [R-ARC-D convergence result]
    Suppose that $\mathcal{S}$ is the distribution of scaled Gaussian matrices and $f$ is a low-rank function of rank $r$ with Lipschitz-continuous second derivatives. Apply Algorithm \ref{alg:R-ARC-D} with the sketch update rule (\ref{eq:l_k:update:step}) with $l_0 \geq 1, C =  \lceil 4C_{l}(2+ \log(16)) \rceil$ where $C_{l}$ is defined in Lemma \ref{lem:Gauss_embedding}, then R-ARC achieves the optimal $\mathcal{O}(\epsilon^{-3/2})$ rate of convergence, with high probability.\\
    \label{thm:R-ARC-D_convergence}
\end{theorem}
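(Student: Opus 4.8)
The plan is to reduce the analysis of R-ARC-D to the embedding-based convergence argument underpinning Theorem \ref{thm:R-ARC_convergence}, by showing that after finitely many ``under-sized'' iterations the adaptive sketch dimension $l_k$ is always large enough to furnish a valid oblivious subspace embedding. The starting point is Lemma \ref{lem:low:rank:hessian}: since $f$ has rank $r$, we have $r_k = \rank(\hessFK) \leq r$ for every $k$, and hence $\hat r_k = \min(l_k, r_k) \leq r$ by Lemma \ref{lem:rank_preserve}. Consequently the running maximum $\hat R_k$ is a non-decreasing, integer-valued sequence bounded above by $r$.

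First I would control the set of bad iterations $B := \{k : l_k < Cr_k + 1\}$. For each $k \geq 1$ in $B$, Lemma \ref{lem:lk:increases} gives $\hat R_k > \hat R_{k-1}$ with probability $1$; since $\hat R_k$ is non-decreasing and bounded by $r$, it can strictly increase at most $r$ times, so (the single possible index $k=0$ aside) $|B| \leq r+1$ almost surely. For every $k \notin B$ we have $l_k \geq Cr_k + 1$, and here the choice $C = \lceil 4C_l(2 + \log 16)\rceil$ is calibrated, via Lemma \ref{lem:Gauss_embedding}, so that a scaled Gaussian $S_k \in \R^{l_k \times d}$ is an oblivious subspace embedding of the rank-$(r_k+1)$ subspace spanned by $\mathrm{range}(\hessFK)$ together with $\nabla f(x_k)$, with at least the fixed success probability demanded by the R-ARC analysis. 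Thus, conditionally on the history, each iteration $k \notin B$ is a ``true'' iteration in the sense of \cite{Zhen-PhD, shaoRandomsubspaceAdaptiveCubic2022}.

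Next I would invoke the stochastic convergence machinery of \cite{Zhen-PhD, shaoRandomsubspaceAdaptiveCubic2022}. That argument is an iteration-counting / supermartingale bound which only requires that each iteration be true with the prescribed conditional probability and which already tolerates a controlled number of non-true iterations; the at most $r+1$ bad iterations are simply absorbed as extra, possibly unproductive, steps. Monotonicity of $f$ along successful steps together with rejection of unsuccessful steps guarantees that these few iterations cannot undo progress. Hence the number of iterations needed to drive $\normTwo{\nabla f(x_k)} \leq \epsilon$ is at most $|B| + \mathcal{O}(\epsilon^{-3/2}) = \mathcal{O}(r) + \mathcal{O}(\epsilon^{-3/2}) = \mathcal{O}(\epsilon^{-3/2})$ with high probability, since $r$ is a constant independent of $\epsilon$ (and by Remark 2 the cost advantage $l_k \leq \max(Cr+1, l_0)$ is retained throughout).

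The main obstacle I anticipate is the probabilistic bookkeeping that glues the adaptive ``learning'' phase to the fixed-dimension analysis. The set $B$ of under-sized iterations is itself random and may be interspersed among the good iterations, so one cannot simply split the run into a deterministic prefix followed by a clean R-ARC run. The delicate step is to verify that the supermartingale / counting argument of \cite{Zhen-PhD, shaoRandomsubspaceAdaptiveCubic2022} remains valid when at most $r+1$ of its iterations fail to meet the conditional trueness probability, i.e.\ to re-run that argument treating the event $\{k \in B\}$ as a deterministically bounded source of false iterations, rather than re-deriving the complexity bound from scratch. A secondary check is to confirm that the constant $C$, together with the ``$+1$'' in the update rule \eqref{eq:l_k:update:step}, does meet the embedding dimension required by Lemma \ref{lem:Gauss_embedding} uniformly over all $r_k \leq r$.
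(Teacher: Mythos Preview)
Your proposal is correct and matches the paper's approach: the paper does not give an explicit proof of Theorem \ref{thm:R-ARC-D_convergence}, but merely states it after Lemma \ref{lem:lk:increases} with the remark that ``Applying this Lemma allows us to prove the following convergence result'', which is precisely the reduction you outline. Your bound $|B|\leq r+1$ on the under-sized iterations via the strict increase of $\hat R_k$, followed by the appeal to the R-ARC analysis of \cite{Zhen-PhD, shaoRandomsubspaceAdaptiveCubic2022} once $l_k \geq Cr_k+1$, is exactly the intended argument, and your flagged obstacle (handling the random, interspersed bad iterations within the stochastic iteration-counting framework) is the only point the paper leaves implicit.
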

In terms of dimension-dependence, the $\mathcal{O}$ bound on the number of iterations in  Theorem \ref{thm:R-ARC-D_convergence} is proportional to $\sqrt{d/r}$ as long as $r_k \leq r$. Thus R-ARC-D benefits from the same optimal convergence rate result as R-ARC and ARC, up to a constant. Furthermore, when applied to low-rank functions, the algorithm is able to learn the function rank whilst solving local subproblems in smaller-dimensional subspaces.

\section{Numerical Experiments}

In these numerical experiments, we apply the R-ARC-D algorithm as described in Algorithm \ref{alg:R-ARC-D}, using the $l_k$ update rule \eqref{eq:l_k:update:step} with $C = 1$ for simplicity. The code we used is a modification of the  ARC code used in \cite{Cartis:2009fq}. We make a minor modification in that we only redraw $S_{k}$ after successful iterations; this update step performs better empirically than redrawing after each iteration. The performance of R-ARC-D is compared with that of R-ARC and ARC. As a measure of budget, we use relative Hessians seen; if at iteration $k$, we draw a sketching matrix of size $l_k \times d$, we see $(l_k / d)^{2}$ relative Hessians. When calculating $\hat{\nabla} f(x_{k})$ and $\hat{\nabla}^{2} f(x_{k})$, we calculate $\nabla f(x_{k})$ and $\nabla^{2} f(x_{k})$, and then multiply by $S_{k}$; the computational efficiency could be much  improved by applying techniques discussed in Section \ref{sec:algorithm}.

\paragraph{Augmented CUTEst problems}

To create low-rank functions to test on, we take CUTEst \cite{gould2015cutest} problems of dimension (or rank) $r \approx 100$ and add dimensions and rotate to create low-rank problems of dimension $d = 1000$. Given a function $f: \R^{r} \rightarrow \R$, this can be achieved by sampling a random orthogonal matrix $Q \in \R^{d \times r}$ so that $Q^{\top}Q = I_{r}$ and define $g: \R^{d} \rightarrow \R$ by $g(x) = f(Q^{\top}x)$ to be a low-rank variant of $f$. To distinguish these problems from the standard CUTEst versions, we prefix the problem names with ``l-". Problem details can be found in Table \ref{tab:cutest_lowrank}.

\paragraph{R-ARC-D update step} In Figure \ref{fig:R_ARC_D_update_lARTIF}, we plot a test to demonstrate how the adaptive sketch update rule \eqref{eq:l_k:update:step} works on an example problem, starting from $l_0 = 2$.

\begin{figure}[H]
    \centering
    \includegraphics[width=0.48\linewidth]{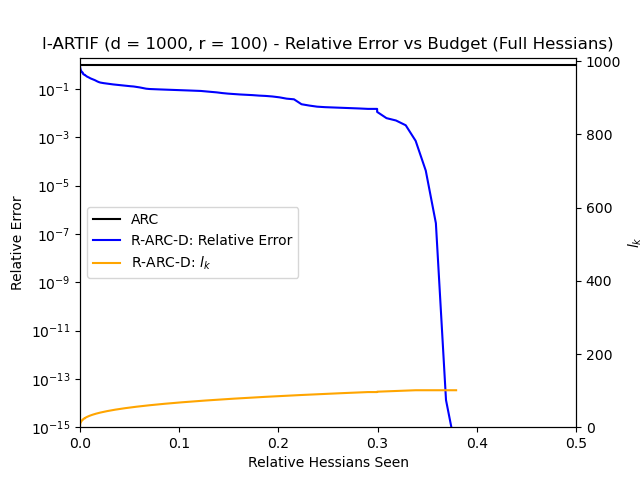}
    \includegraphics[width=0.48\linewidth]{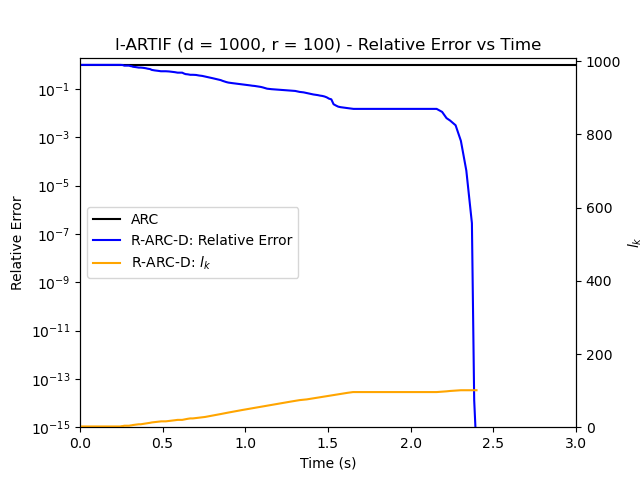}
    \caption{Example of R-ARC-D applied to the low-rank problem l-ARTIF}
    \label{fig:R_ARC_D_update_lARTIF}
\end{figure}
We see that the sketch dimension $l_k$ increases to eventually reach the function rank. For l-ARTIF, we see that R-ARC-D significantly outperforms ARC, which fails to make a step in the budget and time taken for R-ARC-D to converge. We can also apply R-ARC-D to full-rank problems such as ARTIF (with parameter N = 1000), which we plot in Figure \ref{fig:R_ARC_D_update_ARTIF}, where R-ARC-D converges, but not faster than ARC.

\paragraph{Data Profiles}

Here we compare the performance between R-ARC-D and R-ARC through data profiles \cite{moreBenchmarkingDerivativeFreeOptimization2009}. A description of the methodology can be found in Appendix \ref{sec:data_profiles}. The set of problems considers can be found in Table \ref{tab:cutest_lowrank}. For R-ARC-D, we again set $l_0 = 2$, whilst for R-ARC, we sketch at $1\%,\ 5\%\text{ and } 7.5\%$ of the original problem dimension. As the functions are of dimension $d = 1000$ with rank $r \approx 100$, this corresponds to $\approx 10-75\%$ of the function rank. The results are plotted in Figure \ref{fig:data_profiles}, where we show results for tolerances $\tau = 1e-2$ (low-precision) and $\tau = 1e-5$ (high precision). We repeat each problem $5$ times, with different $Q$ matrices, treating each run as a separate problem in the plots.

\begin{figure}[H]
    \centering
    \includegraphics[width=0.48\linewidth]{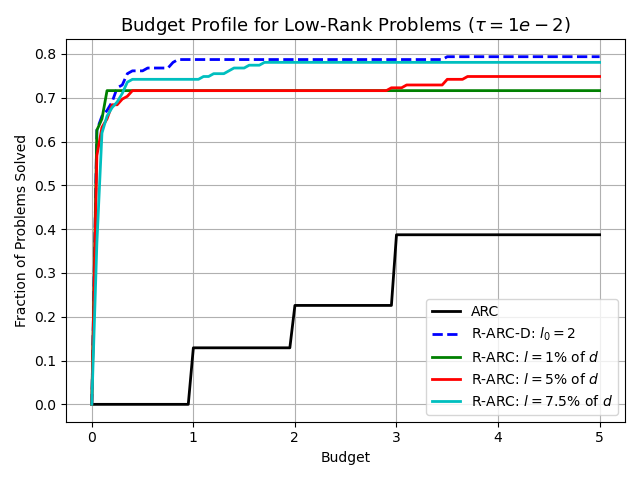}
    \includegraphics[width=0.48\linewidth]{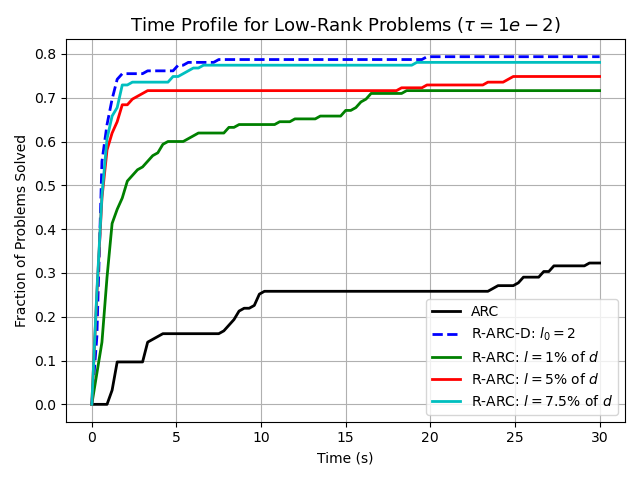}
    \includegraphics[width=0.48\linewidth]{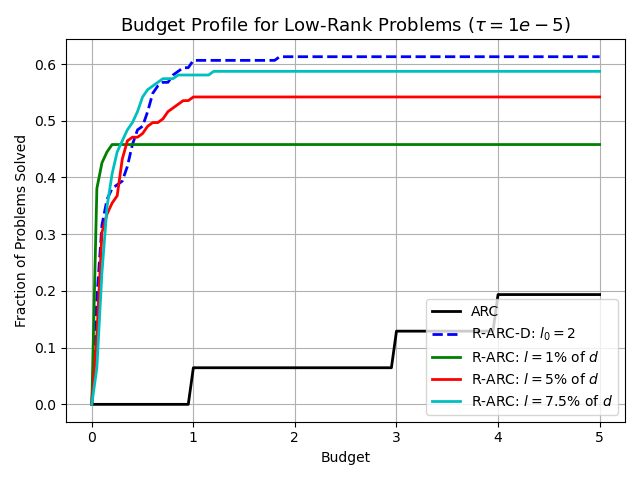}
    \includegraphics[width=0.48\linewidth]{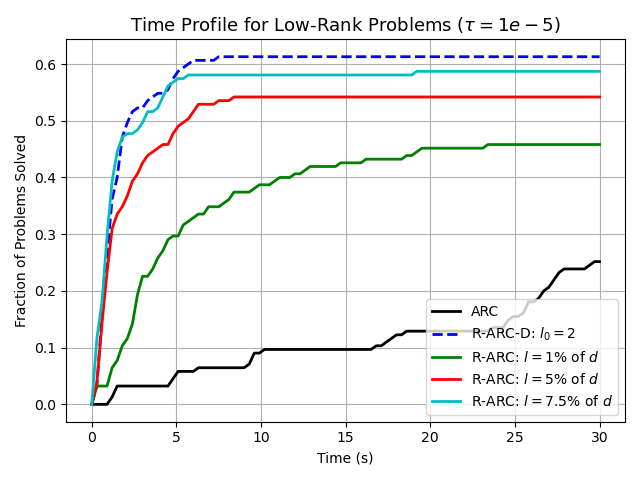}
    \label{fig:data_profiles}
    \caption{Data profiles of R-ARC-D compared to R-ARC and ARC}
\end{figure}

We see that R-ARC-D started at $l_0 = 2$ outperforms R-ARC regardless of the fixed sketch size used by R-ARC. This is more significant for the high-precision solutions ($\tau = 1e-5$) where R-ARC-D typically increases $l_k$ until it reaches the function rank $r$. In Figure \ref{fig:individual_problems}, we plot individual problem performance for several of the problems considered here. In these plots, we see that whilst R-ARC with $l = 1\%$ of $d$ occasionally performs best in terms of relative Hessians, it performs worst in terms of time due to it taking significantly more iterations. Thus overall, we found that R-ARC-D performs particularly well on low-rank problems, which we have demonstrated both numerically and theoretically.

\paragraph{Acknowledgments} The first author's (ET) research was supported by the Oxford Centre for Doctoral Training in Mathematics of Random Systems (EPSRC EP/S023925/1), while the second author's (CC), by the Hong Kong Innovation and Technology Commission's Center for Intelligent Multi-dimensional Analysis (InnoHK Project CIMDA).

\bibliography{combined}
\newpage
%\clearpage
\appendix

\section{Additional results}\label{sec:results}

\begin{definition}[$\epsilon$-subspace embedding \cite{10.1561/0400000060}]
    An $\epsilon$-subspace embedding for a matrix $B \in \R^{d \times k}$ is a matrix $S \in \R^{l \times d}$ such that
    \begin{equation}
        (1-\epsilon)\normTwo{y}^{2} \leq \normTwo{Sy}^{2} \leq (1+\epsilon)\normTwo{y}^{2} \texteq{for all $y \in Y = \{y :y = Bz, z\in \R^{k}\}$}.
    \end{equation}
\end{definition}

\begin{definition}[Oblivious subspace embedding \cite{10.1561/0400000060, 10.1109/FOCS.2006.37}]
    A distribution $\mathcal{S}$ on $S \in \R^{l \times d}$ is an $(\epsilon, \delta)$-oblivious subspace embedding for a given fixed/arbitrary matrix $B \in \R^{d \times k}$ , we have that, with a high probability of at least $1-\delta$, a matrix $S$ from the distribution is an $\varepsilon$-subspace embedding for $B$.
    \label{def:oblivious_embedding}
\end{definition}

\begin{lemma}[Theorem 2.3 in \cite{10.1561/0400000060}] \label{lem:Gauss_embedding}
Let $\epsilon_S \in (0,1)$ and $S \in \R^{l \times d}$ 
be a scaled Gaussian matrix. 
Then for any fixed $d \times (d+1)$ matrix $M$ with rank at most $r+1$, 
with probability $1-\delta_S$ we have that simultaneously for all 
$z \in \R^{d+1}$, $\normTwo{SMz}^2 \geq (1-\epsilon_S) \normTwo{Mz}^2$, 
where

\begin{equation}
    \delta_S = \exp{-\frac{l(\epsilon_S)^{2}}{C_{l}} + r + 1} 
\end{equation}
and $C_l>0$ is an absolute constant. 
\end{lemma}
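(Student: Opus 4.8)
The statement is a one-sided (lower) oblivious subspace embedding for the column space of $M$, so I would prove it by reducing to the smallest singular value of a Gaussian matrix and then invoking Gaussian concentration. Since $M$ has rank at most $r+1$, its column space $Y = \{Mz : z \in \R^{d+1}\}$ has dimension $k \le r+1$; pick $U \in \R^{d \times k}$ with orthonormal columns spanning $Y$, so that every $y \in Y$ equals $Uw$ for a unique $w$ with $\normTwo{y} = \normTwo{w}$. The simultaneous-over-$z$ requirement then reduces, by homogeneity, to the single deterministic inequality $\normTwo{SUw}^2 \ge (1-\epsilon_S)\normTwo{w}^2$ for all $w \in \R^k$, i.e.\ to $\sigma_{\min}(SU)^2 \ge 1-\epsilon_S$. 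A key simplification is that $SU \in \R^{l \times k}$ is itself a scaled Gaussian matrix: by rotational invariance of the Gaussian, the rows of $SU$ are i.i.d.\ $\mathcal{N}(0, l^{-1} I_k)$ because $U^\top U = I_k$. Thus the whole problem is reduced to the smallest singular value of an $l \times k$ scaled Gaussian matrix with $k \le r+1$ (the relevant regime being $l \ge k$).

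To obtain the stated probability I would bound $\sigma_{\min}(SU)$ from below using Gordon's comparison inequality together with Gaussian Lipschitz concentration. Writing $A = \sqrt{l}\, SU$, which has i.i.d.\ standard Gaussian entries, Gordon's theorem gives $\mathbb{E}\,\sigma_{\min}(A) \ge \sqrt{l} - \sqrt{k}$; since $\sigma_{\min}$ is a $1$-Lipschitz function of the entries in Frobenius norm, Gaussian concentration yields $\mathbb{P}(\sigma_{\min}(A) \le \sqrt{l} - \sqrt{k} - s) \le \exp(-s^2/2)$ for any $s \ge 0$. Dividing by $\sqrt l$, on the complementary event $\sigma_{\min}(SU) \ge 1 - \sqrt{k/l} - s/\sqrt{l}$. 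Using $(1-a)^2 \ge 1 - 2a$ it then suffices to choose $s$ so that $\sqrt{k/l} + s/\sqrt{l} \le \epsilon_S/2$; taking $s = \tfrac12\sqrt{l}\,\epsilon_S - \sqrt{k}$ makes $\sigma_{\min}(SU)^2 \ge 1-\epsilon_S$ hold on the good event, with failure probability at most $\exp(-s^2/2)$. Expanding $s^2 = \tfrac14 l\epsilon_S^2 - \sqrt{lk}\,\epsilon_S + k$ and bounding it below produces a bound of the packaged form $\exp(-l\epsilon_S^2/C_l + r + 1)$, the dimension contributing the additive $r+1 \ge k$ term. A cruder but more elementary alternative (which is the route taken in the cited reference) replaces Gordon's inequality by a $\gamma$-net of the unit sphere of $\R^k$, a pointwise $l^{-1}\chi^2_l$ lower-tail Chernoff bound $\mathbb{P}(\normTwo{SUw}^2 \le 1-\epsilon_S/2) \le \exp(-c\,l\epsilon_S^2)$, and a union bound over the net of cardinality at most $(3/\gamma)^k$; this also gives the form $\exp(-l\epsilon_S^2/C_l + O(r+1))$, at the cost of a worse constant in the additive term.

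The main obstacle is purely the bookkeeping of constants needed to collapse the bound into the exact shape $\exp(-l\epsilon_S^2/C_l + r+1)$. Concretely, the cross term $-\sqrt{lk}\,\epsilon_S$ in $s^2$ and the quadratic-to-linear passage $(1-a)^2 \ge 1-2a$ must be handled so that the negative contribution scales like $l\epsilon_S^2$ while the positive dimension contribution is controlled by $r+1$; this forces the regime $l \ge c(r+1)/\epsilon_S^2$ in which the bound is non-vacuous, exactly as the additive $r+1$ encodes. All remaining ingredients (Gordon's inequality, Gaussian Lipschitz concentration, and the orthonormal reduction) are standard, so once $C_l$ is fixed the conclusion follows; since the result is quoted as Theorem~2.3 of \cite{10.1561/0400000060}, one may alternatively cite that reference directly, the above being a reconstruction of its proof.
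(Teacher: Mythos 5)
The paper contains no proof of this lemma: it is imported verbatim as Theorem 2.3 of \cite{10.1561/0400000060} and used as a black box in the convergence analysis, so there is no internal argument to compare yours against. Your blind reconstruction is, however, correct, and it is a genuine proof rather than a citation. The orthonormal reduction is sound: with $U \in \R^{d \times k}$, $U^\top U = I_k$, $k \le r+1$ spanning the column space of $M$, rotational invariance makes the rows of $SU$ i.i.d.\ $\mathcal{N}(0, l^{-1} I_k)$, and the simultaneous claim over all $z \in \R^{d+1}$ (including $Mz=0$, which is trivial) reduces by homogeneity to $\sigma_{\min}(SU)^2 \ge 1-\epsilon_S$. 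Gordon's inequality plus Gaussian Lipschitz concentration (valid here: $\sigma_{\min}$ is $1$-Lipschitz in Frobenius norm, and Borell--TIS needs no convexity) is a standard route, and the constant bookkeeping you flag as the ``main obstacle'' does collapse into exactly the stated shape: with your choice $s = \tfrac12\sqrt{l}\,\epsilon_S - \sqrt{k}$, Young's inequality $\sqrt{lk}\,\epsilon_S \le \tfrac{1}{12}l\epsilon_S^2 + 3k$ gives
\begin{equation*}
\frac{s^2}{2} \;=\; \frac{l\epsilon_S^2}{8} - \frac{\sqrt{lk}\,\epsilon_S}{2} + \frac{k}{2} \;\ge\; \frac{l\epsilon_S^2}{12} - k \;\ge\; \frac{l\epsilon_S^2}{12} - (r+1),
\end{equation*}
so the failure probability is at most $\exp\left(-\tfrac{l\epsilon_S^2}{12} + r + 1\right)$, i.e.\ the lemma's bound with $C_l = 12$ and the additive term exactly $r+1$. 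One point you only gesture at and should make explicit in a complete write-up: the choice of $s$ requires $s \ge 0$, i.e.\ $l \ge 4k/\epsilon_S^2$; in the complementary regime $l\epsilon_S^2 < 4k \le 4(r+1)$, so $\delta_S \ge \exp\left(-4(r+1)/C_l + r+1\right) \ge 1$ whenever $C_l \ge 4$ (automatic for $C_l = 12$) and the lemma holds vacuously, so a short case split closes the argument. Finally, you correctly prove only the one-sided lower inequality, which is all the statement (and the paper's use of it) requires, and your net-plus-Chernoff alternative is an equally valid, more elementary fallback with a worse constant.
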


\subsection{Proof of Lemma \ref{lem:low:rank:hessian}}
In order to prove Lemma \ref{lem:low:rank:hessian}, we need the following result.
\begin{lemma}[\cite{cartis_learning_2024, cosson_gradient_2022}]\label{lem:low:rank:characterisation}
    A function $f:\R^{d} \rightarrow \R$ has rank $r$, with $r \leq d$ if and only if there exists a matrix $A \in \R^{r \times d}$ and a map $\sigma:\R^{r} \rightarrow \R$ such that $f(x) = \sigma(Ax)$ for all $x \in \R^{d}$.
\end{lemma}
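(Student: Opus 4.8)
The plan is to prove both implications by explicit linear-algebraic constructions built on the orthogonal decomposition $\R^{d} = \mathcal{T} \oplus \mathcal{T}^{\perp}$ supplied by Definition \ref{def:low:rank}, and then to reconcile the two notions of minimality so that the two values of $r$ agree. The two constructions are essentially inverse to one another, so I would set them up symmetrically.

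For the forward direction (low-rank $\Rightarrow$ factored form), let $\mathcal{T}$ be the effective subspace of dimension $r$ and take $A \in \R^{r \times d}$ to be the matrix whose rows are an orthonormal basis of $\mathcal{T}$, so that $AA^{\top} = I_{r}$ and $A^{\top}A$ is the orthogonal projector onto $\mathcal{T}$. I would define $\sigma: \R^{r} \rightarrow \R$ by $\sigma(y) = f(A^{\top}y)$. For an arbitrary $x \in \R^{d}$, write $x = x_{\top} + x_{\perp}$ with $x_{\top} = A^{\top}Ax \in \mathcal{T}$ and $x_{\perp} \in \mathcal{T}^{\perp}$; the invariance property of Definition \ref{def:low:rank} gives $f(x) = f(x_{\top})$, and since $A^{\top}(Ax) = A^{\top}Ax = x_{\top}$ I conclude $\sigma(Ax) = f(A^{\top}Ax) = f(x_{\top}) = f(x)$, as required.

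For the reverse direction (factored form $\Rightarrow$ low-rank), given $f(x) = \sigma(Ax)$ with $A \in \R^{r \times d}$, I would set $\mathcal{T} = \Span(A^{\top})$, so that $\mathcal{T}^{\perp} = \ker A$. For any $x_{\top} \in \mathcal{T}$ and $x_{\perp} \in \mathcal{T}^{\perp}$ we have $Ax_{\perp} = 0$, whence $f(x_{\top} + x_{\perp}) = \sigma(Ax_{\top} + Ax_{\perp}) = \sigma(Ax_{\top}) = f(x_{\top})$. This is exactly the invariance in Definition \ref{def:low:rank}, so $f$ is constant along $\mathcal{T}^{\perp}$ and hence of rank at most $\dim \mathcal{T} = \rank(A) \leq r$.

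The delicate point, and the step I expect to require the most care, is matching the two definitions of ``smallest integer'', i.e. showing the minimal number of rows in the factorization equals the minimal effective-subspace dimension (the reverse direction above only yields rank \emph{at most} $r$). I would argue this by playing the two constructions off one another on minimal representations: if $f$ has effective subspace of minimal dimension $r$, the forward construction produces a factorization with $A$ of rank exactly $r$, so the minimal factorization row count is $\leq r$; conversely, any factorization $f = \sigma(A'x)$ with $\rank(A') = \rho$ forces, by the reverse construction, invariance of $f$ along $\ker A'$ of dimension $d - \rho$, so the minimal effective subspace satisfies $\dim \mathcal{T} \leq \rho$. A factorization with $\rho < r$ would therefore contradict the minimality of $r$ in Definition \ref{def:low:rank}, so the two minimal values coincide and the equivalence follows.
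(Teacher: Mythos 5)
Your proof is correct; note that the paper itself does not prove Lemma \ref{lem:low:rank:characterisation} but quotes it from \cite{cartis_learning_2024, cosson_gradient_2022}, and your argument is the standard one used there: rows of $A$ form an orthonormal basis of the effective subspace $\mathcal{T}$ with $\sigma(y) = f(A^{\top}y)$ in the forward direction, and $\mathcal{T} = \Span(A^{\top})$, $\mathcal{T}^{\perp} = \ker A$ in the reverse. Your attention to the minimality matching is well placed --- read literally, the reverse direction only gives rank at most $r$ --- and your contradiction argument (a factorization with fewer than $r$ rows would yield an invariant subspace of dimension below $r$, violating the minimality in Definition \ref{def:low:rank}) closes that gap correctly.
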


\begin{proof}[Proof of Lemma \ref{lem:low:rank:hessian}]
Using Lemma \ref{lem:low:rank:characterisation}, we can write
\begin{equation}
    f(x) = \sigma(Ax)
\end{equation}
where $A \in \mathbb{R}^{r \times d}$ and clearly, $A$ has rank at most $r$. We have
\begin{equation}
    \nabla f(x) =
    \frac{\partial \sigma(Ax)}{\partial x} =
    A^{\top} \cdot \frac{\partial \sigma(Ax)}{\partial (Ax)} =
    A^{\top} \nabla \sigma(Ax).
\end{equation}
Furthermore, as $f$ is $C^2$ by assumption, we have

\begin{equation}
    \nabla^{2} f(x) =
    \frac{\partial \nabla f(x)}{\partial x^T} =
    A^{\top} \cdot  \frac{\partial \nabla\sigma(Ax)}{\partial x^T} =
    A^{\top} \cdot \frac{\partial \nabla\sigma(Ax)}{\partial (Ax)^T} \cdot A = A^{\top}[\nabla^{2}\sigma(Ax)]A.
\end{equation}
As ${\rm rank}(A) \leq r$, we can conclude that $\nabla^{2} f(x)$ has rank bounded above by $r$.
\end{proof}

\section{Data profile methodology}
\label{sec:data_profiles}

We measure algorithm performance using data profiles \cite{moreBenchmarkingDerivativeFreeOptimization2009}, which themselves are a variant of performance profiles \cite{dolan2002benchmarking}. We use \textit{relative Hessians seen} as well as runtime for our data profiles. The relative Hessians seen at an iteration $k$ is $(l_{k}/d)^{2}$ where $l_{k}$ is the sketch dimension and $d$ is the problem dimension.
Using  the same notation as in \cite{cartis_randomised_2022}, for a given solver $s$, test problem $p \in \mathcal{P}$ and tolerance $\tau \in (0, 1)$, we determine the number of relative Hessians seen $N_{p}(s, \tau)$ required for a problem to be solved:
\begin{equation*}
    N_{p}(s, \tau) :=\ \text{\# of relative Hessians seen until}\ f(x_{k}) \leq f(x^{*}) + \tau (f(x_{0}) - f(x^{*})).
\end{equation*}
We set $N_{p}(s, \tau) = \infty$ in the cases where the tolerance is not reached within the maximum number of iterations, which we take to be 2000.

To produce the data profiles, we plot
\begin{equation*}
    \pi_{s, \tau}^{N}(\alpha):= \frac{|\{p \in \mathcal{P}\ :\ N_{p}(s, \tau) \leq \alpha\}|}{|\mathcal{P}|}\ \text{for}\ \alpha \in [0, 100],
\end{equation*}
namely, the fraction of problems solved after $\alpha$ relative Hessians seen. For runtime data profiles, we replace relative Hessians seen with the runtime in the above definitions.

\section{Low-rank problems}

{\footnotesize{\begin{table}[H]
\centering
\begin{tabular}{c l c c c c c}  
\toprule
\# & Problem & $d$ & $r$ & $f(x_{0})$ & $f(x^{*})$ & Parameters \\
\midrule
1 & l-ARTIF & 1000 & 100 & $1.8296\times 10^{1}$ & $0$ & N = 100 \\
2 & l-ARWHEAD & 1000 & 100 & $2.9700\times 10^{2}$ & $0$ & N = 100 \\
3 & l-BDEXP & 1000 & 100 & $2.6526\times 10^{1}$ & $0$ & N = 100 \\
4 & l-BOX & 1000 & 100 & 0 & $-1.1240\times 10^{1}$ & N = 100 \\
5 & l-BOXPOWER & 1000 & 100 & $8.6625\times 10^{2}$ & $0$ & N = 100 \\
6 & l-BROYDN7D & 1000 & 100 & $3.5098\times 10^{2}$ & $4.0123\times 10^{1}$ & N/2 = 50 \\
7 & l-CHARDIS1 & 1000 & 98 & $1.2817\times 10^{1}$ & $0$ & NP1 = 50 \\
8 & l-COSINE & 1000 & 100 & $8.6881\times 10^{1}$ & $-9.9000\times 10^{1}$ & N = 100 \\
9 & l-CURLY10 & 1000 & 100 & $-6.2372\times 10^{-3}$ & $-1.0032\times 10^{4}$ & N = 100 \\
10 & l-CURLY20 & 1000 & 100 & $-1.2965\times 10^{-2}$ & $-1.0032\times 10^{4}$ & N = 100 \\
11 & l-DIXMAANA1 & 1000 & 90 & $8.5600\times 10^{2}$ & 1 & M = 30 \\
12 & l-DIXMAANF & 1000 & 90 & $1.2253\times 10^{3}$ & 1 & M = 30 \\
13 & l-DIXMAANP & 1000 & 90 & $2.1286\times 10^{3}$ & 1 & M = 30 \\
14 & l-ENGVAL1 & 1000 & 100 & $5.8410\times 10^{3}$ & 0 & N = 100 \\
15 & l-FMINSRF2 & 1000 & 121 & $2.5075\times 10^{1}$ & 1 & P = 11 \\
16 & l-FMINSURF & 1000 & 121 & $3.0430\times 10^{1}$ & 1 & P = 11 \\
17 & l-NCB20 & 1000 & 110 & $2.0200\times 10^{2}$ & $1.7974\times 10^{2}$ & N = 100 \\
18 & l-NCB20B & 1000 & 100 & $2\times 10^{2}$ & $1.9668\times 10^{2}$ & N = 100 \\
19 & l-NONCVXU2 & 1000 & 100 & $2.6397\times 10^{6}$ & $2.3168\times 10^{2}$ & N = 100 \\
20 & l-NONCVXUN & 1000 & 100 & $2.7270\times 10^{6}$ & $2.3168\times 10^{2}$ & N = 100 \\
21 & l-NONDQUAR & 1000 & 100 & $1.0600\times 10^{2}$ & 0 & N = 100 \\
22 & l-ODC & 1000 & 100 & 0 & $-1.9802\times 10^{-2}$ & (NX, NY) = (10, 10) \\
23 & l-OSCIGRNE & 1000 & 100 & $3.0604\times 10^{8}$ & 0 & N = 100 \\
24 & l-PENALTY3 & 1000 & 100 & $9.8018\times 10^{7}$ & $1\times 10^{-3}$ & N/2 = 50 \\
25 & l-POWER & 1000 & 100 & $2.5503\times 10^{7}$ & 0 & N = 100 \\
26 & l-RAYBENDL & 1000 & 126 & $9.8028\times 10^{1}$ & $9.6242\times 10^{1}$ & NKNOTS = 64 \\
27 & l-SCHMVETT & 1000 & 100 & $-2.8029\times 10^{2}$ & $-2.9940\times 10^{3}$ & N = 100 \\
28 & l-SINEALI & 1000 & 100 & $-8.4147\times 10^{-1}$ & $-9.9010\times 10^{3}$ & N = 100 \\
29 & l-SINQUAD & 1000 & 100 & $6.5610\times 10^{-1}$ & $-3$ & N = 100 \\
30 & l-TOINTGSS & 1000 & 100 & $8.9200\times 10^{2}$ & $1.0102\times 10^{1}$ & N = 100 \\
31 & l-YATP2SQ & 1000 & 120 & $9.1584\times 10^{4}$ & 0 & N = 10 \\
\bottomrule
\end{tabular}
\caption{The 31 low-rank CUTEst test problems used in the data profiles.}\label{tab:cutest_lowrank}
\end{table}}}

\section{Individual problem performance}
\label{sec:problem_by_problem}
\begin{figure}[H]
    \centering
    \includegraphics[width=0.4\linewidth]{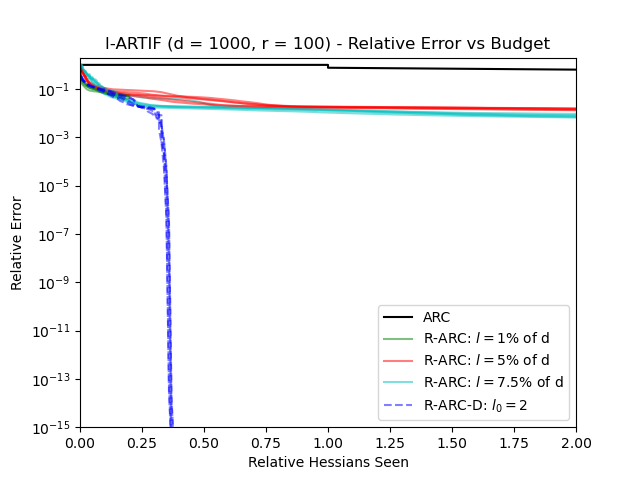}
    \includegraphics[width=0.4\linewidth]{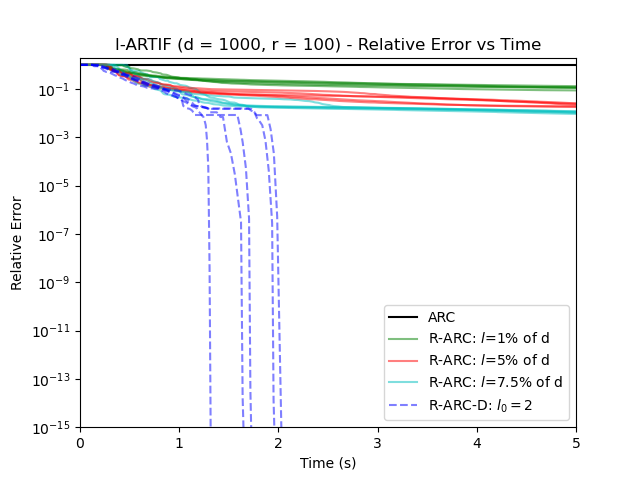}
    \includegraphics[width=0.4\linewidth]{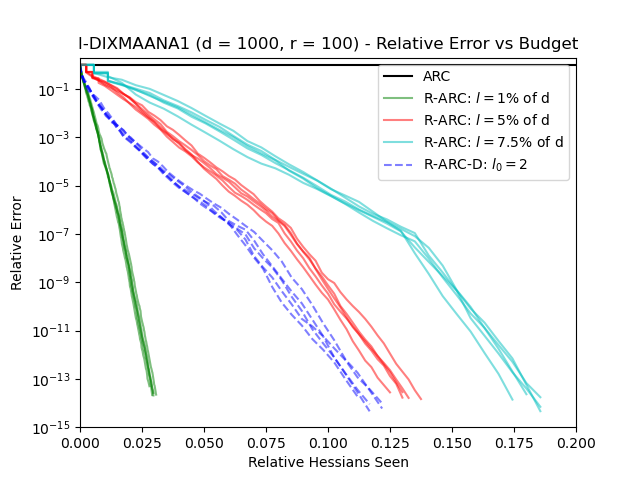}
    \includegraphics[width=0.4\linewidth]{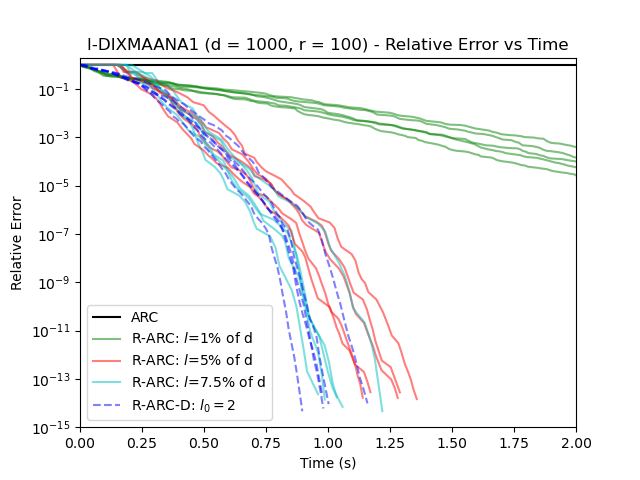}
    \includegraphics[width=0.4\linewidth]{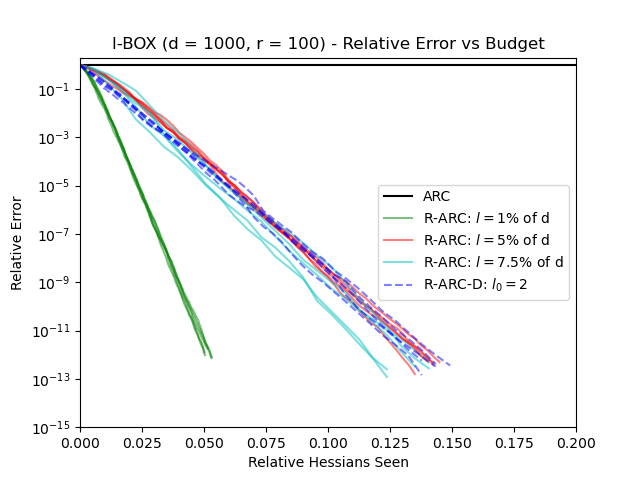}
    \includegraphics[width=0.4\linewidth]{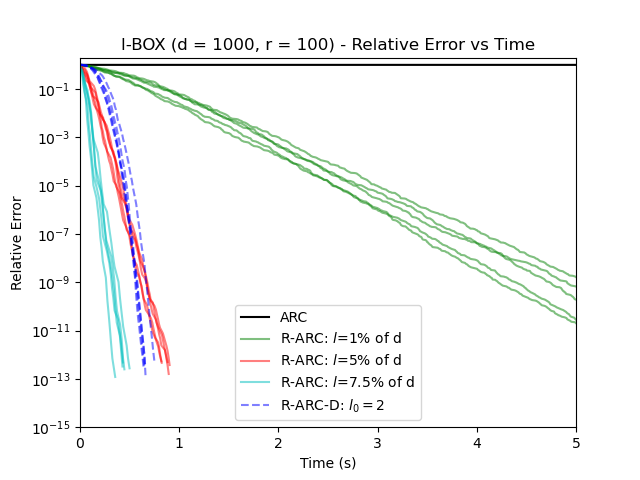}
    \includegraphics[width=0.4\linewidth]{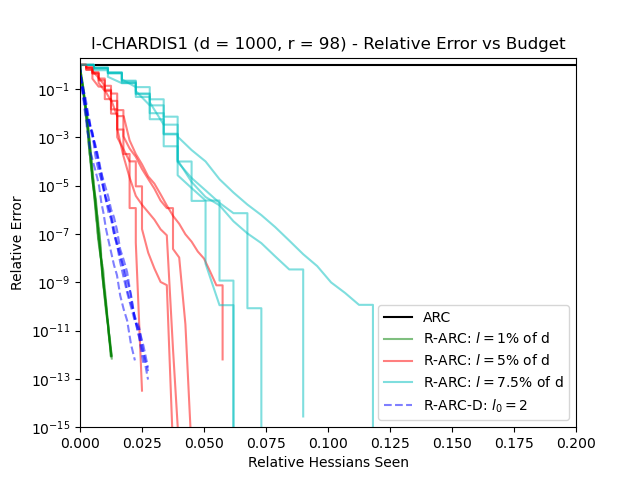}
    \includegraphics[width=0.4\linewidth]{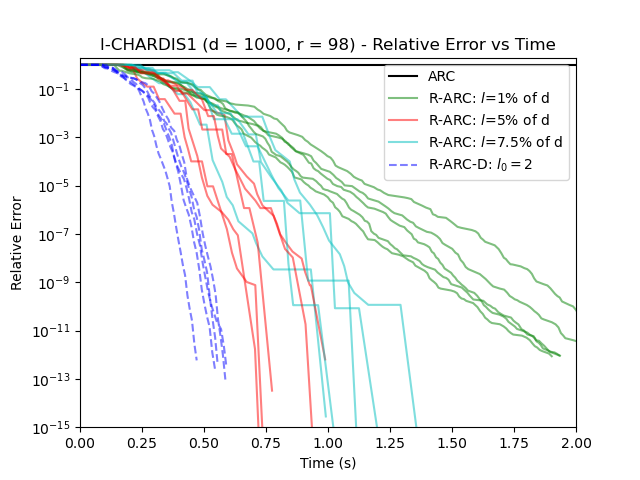}
    \label{fig:individual_problems}
    \caption{Comparison between R-ARC-D and R-ARC on low-rank problems from Table \ref{tab:cutest_lowrank}}
\end{figure}
\begin{figure}[H]
    \centering
    \includegraphics[width=0.48\linewidth]{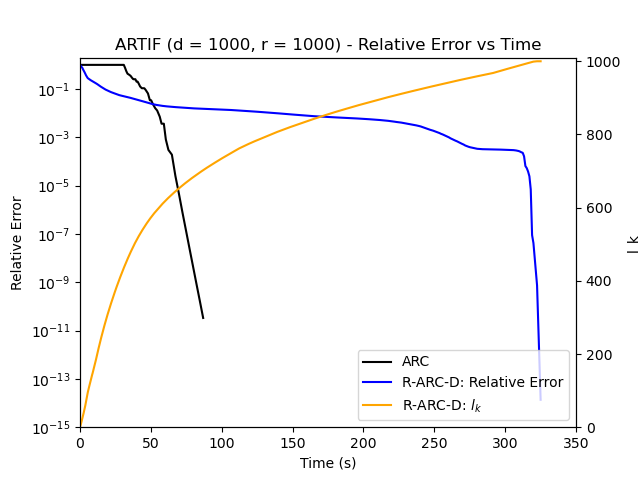}
    \includegraphics[width=0.48\linewidth]{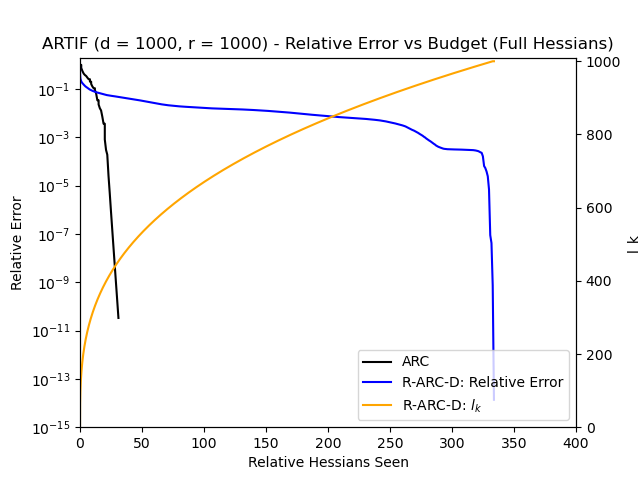}
    \caption{Example of R-ARC-D applied to the full-rank problem ARTIF (with parameter N = 1000), which has $r = d = 1000$.}
    \label{fig:R_ARC_D_update_ARTIF}
\end{figure}

\end{document}